\newtheorem{theorem}{Theorem}[section]
\numberwithin{equation}{section}
\begin{document}
\title[]{Stability of PID-Controlled Linear Time-Delay Feedback Systems}

\author{Gianpasquale Martelli}
\address{Via Domenico da Vespolate 8, 28079 Vespolate, Italy}
\curraddr{}
\email{gianpasqualemartelli@libero.it}
\thanks{}

\subjclass[2000]{93D05; 34K20}

\keywords{Delay systems, proportional-integral-derivative (PID) control, stability.}

\date{February 15, 2008}

%\dedicatory{}

\begin{abstract}
The stability of feedback systems consisting of linear time-delay plants and PID controllers has been investigated for many years by means of several methods, of which the Nyquist criterion, a generalization of the Hermite-Biehler Theorem, and the root location method are well known. The main purpose of these researches is to determine the range of controller parameters that allow stability.  Explicit and complete expressions of the boundaries of these regions and computation procedures with a finite number of steps are now available only for first-order plants, provided with one time delay. In this note, the same results, based on Pontryagin's studies, are presented for arbitrary-order plants.
\end{abstract}

\maketitle

\section{Introduction}

The feedback structure considered in this note is depicted in Fig. 1 and the related transfer functions  of the process $P(s)$ and the controller $C(s)$ are given by
\begin{equation}\label{eq:1.1}
P(s)= K \frac{P_{n}(s)}{P_{d}(s)}\ e^{-L \, s}=K  \frac{\prod_ {i=1}^{i=m}(1+Z_{i}s)}{\prod_ {i=1}^{i=n} (1+T_{i}s)}\ e^{-L \, s}
\end{equation}
\begin{equation}\label{eq:1.2}
C(s)=K_{p}+ \frac{K_{i}}{s}\ + K_{d}s ,
\end{equation}
where $K$ is the plant steady-state gain, $T_{i}$ and $Z_{i}$ the plant time constants, $L$ is the positive plant time delay and $K_{p}$, $K_{i}$ and $K_{d}$ are the parameters of the PID controller.
\begin{figure}[htbp]
\centering
\includegraphics{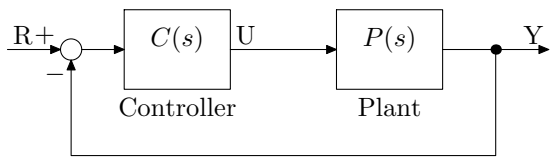}
\caption{Feedback control system}
\end{figure}

Complete explicit expressions of the boundaries of the stability regions in first-order plants have been found in \cite{bib1} with a version of the Hermite-Biehler Theorem derived by Pontryagin, in \cite{bib2} with the Nyquist criterion, and in \cite{bib3} with the root location method.
Moreover the second-order plants have been investigated in \cite{bib4} by means of a graphical approach; the results obtained are correct, but the stability conditions are not all explicit and no finite number of required computation steps is specified.
Finally arbitrary-order plants have been studied with the Nyquist criterion  in \cite{bib5}, but P  and PID controllers with a given $K_{p}$ separately are considered and no information about the set of the process parameters that allow stability is given. 

This note can be considered as an extension of \cite{bib1} to the arbitrary-order plants and is organized as follows. In Section 2 all the analytical expressions that will be used in the next sections are evaluated in detail. In Section 3 a process transfer function without zeros is considered and the stability regions are  explicitly evaluated by means of a version of the Hermite-Biehler Theorem derived by Pontryagin, already used in \cite{bib1}. A second-order plant is studied as example and the related stability regions are determined and plotted in two figures. In Section 4 a process transfer function with zeros is considered and the stability regions are found by means of a new theorem. The two procedures of the Sections 3 and 4 are essentially equal, consist of a finite number of steps and yield the stability regions in both process and controller parameters planes. In Section 5 some conclusive remarks are given.

The importance of explicit expressions of the boundaries of the stability zones has been enhanced by the introduction of the controller tuning charts in \cite{bib6} (used also in \cite{bib7}).

\section{Preliminaries}
The closed-loop transfer function $T(s)$ of the system is given by
\begin{equation}\label{eq:2.0a}
T(s)= \frac{K (K_{i}+K_{p}s+ K_{d}s^{2}) P_{n}(s)}{s P_{d}(s)e^{L \, s}+K (K_{i}+K_{p}s+K_{d}s^{2})P_{n}(s)}\ .
\end{equation}
According to the Pontryagin's studies, presented in \cite{bib8} and summarized in \cite{bib9}, it is necessary that $T(s)$  has a bounded number of poles  with arbitrary large positive real part for stability. This holds if the denominator of $T(s)$ has a principal term $a_{p \, q}s^{p}e^{q \,s}$ (in our case, where $p=n+1$ and $q=1$, it exists if $m<=n-1$) and the function $\chi_{p}(s)$, coefficient of $s^{p}$, (in our case $\chi_{p}(s)= e^{q \,s} \prod_{i=1}^{i=n}T_{i}$ for $m<n-1$ and $\chi_{p}(s)= e^{q \,s} \prod_{i=1}^{i=n}T_{i}+ K \,K_{d} \prod_{i=1}^{i=m}Z_{i}$ for $m=n-1$) has all the zeros in the open left half plane. This happens if one of the following conditions is satisfied:
\begin{enumerate}
\item[(a)]  $m<n-1$
\item[(b)]  $m=n-1$ and $ \left| K \,K_{d} \prod_{i=1}^{i=m}Z_{i} \right| < \left|  \prod_{i=1}^{i=n}T_{i} \right|$ .
\end{enumerate}
The denominator of $T(s)$, given by (\ref{eq:2.0a}), divided by $P_{n}(s)/L$ and hence named $H(s)$, can be written, according to (\ref{eq:1.1}), as 
\begin{equation}\label{eq:2.0b}
H(s)= L \, s  e^{L \, s} \frac{ \prod_{i=1}^{i=n} (1+T_{i}s) }{\prod_{i=1}^{i=m}(1+Z_{i}s)}\ + L \,K (K_{i}+K_{p}s+K_{d}s^{2}) .
\end{equation} 
Since all the poles of the closed-loop transfer function $T(s)$ are zeros of $H(s)$ and a system is stable if no pole of $T(s)$ lies in the right half-plane, the above system is stable if no zero of $H(s)$ lies in the right half-plane.
For process transfer functions without zeros, examined in Section 3, $H(s)$ is a quasi-polynomial and a version of the Hermite-Biehler Theorem derived by Pontryagin is employed. For process transfer functions with zeros, examined in Section 4, $H(s)$ is a quasi-polynomial divided by a polynomial and a new theorem, proved by use of the  Principle of the Argument, is employed.
 
Now, before the explanation of the proposed procedures, let us evaluate all the expressions that will be used in the next sections. It is convenient to introduce the normalized time referred to the plant time delay $L$  and the dimensionless parameters $\sigma=L \,s$, $t_{i}=T_{i}/L$, $z_{i}=Z_{i}/L$, $h=K \,K_{p}$, $h_{i} = K \,K_{i}L$ and $h_{d} = K \,K_{d}/L$, in order to obtain equations independent of the real values of the parameters.
Applying these simplifications, (\ref{eq:2.0b}) becomes 
\begin{equation}\label{eq:2.1}
H(\sigma)= \sigma  e^{\sigma} \frac{ \prod_{i=1}^{i=n} (1+t_{i} \sigma) }{\prod_{i=1}^{i=m}(1+z_{i} \sigma)}\ + h_{i}+h \, \sigma+h_{d} \sigma^{2} .
\end{equation} 
Moreover, assuming $P_{d}(j \, y/L)=A(y)+jB(y)$ and $P_{n}(j \, y/L)=C(y)+jD(y)$, the real and the imaginary components $F(y)$ and $G(y)$ of  $H(\sigma)$, calculated for $\sigma=j \, y$, are given by
\begin{equation}\label{eq:2.2}
F(y)= h_{e}- F_{1}(y)
\end{equation} 
\begin{equation}\label{eq:2.3}
G(y)= y[h-G_{1}(y)]
\end{equation}
where
\begin{equation}\label{eq:2.4}
F_{1}(y)=y[Q(y)cos(y)+P(y)sin(y)]
\end{equation}
\begin{equation}\label{eq:2.5}
G_{1}(y)=-P(y)cos(y)+Q(y)sin(y)
\end{equation}
\begin{equation}\label{eq:2.6}
h_{e}=h_{i}-h_{d} \,y^{2}
\end{equation}
\begin{displaymath}
P(y)=\frac{A(y)C(y)+B(y)D(y)}{C^{2}(y)+D^{2}(y)}\
\end{displaymath}
\begin{displaymath}
Q(y)=\frac{-A(y)D(y)+B(y)C(y)}{C^{2}(y)+D^{2}(y)}\
\end{displaymath}
\begin{equation}\label{eq:2.9}
A(y)= 1+ \sum_{i=1}^{i=int(n/2)} U(n,2 \, i)(-1)^{i}y^{2 \, i}
\end{equation}
\begin{equation}\label{eq:2.10}
B(y)= \sum_{i=0}^{i=int((n-1)/2)} U(n,2 \, i+1)(-1)^{i}y^{2 \,i+1}
\end{equation}
\begin{equation}\label{eq:2.11}
C(y)= 1+ \sum_{i=1}^{i=int(m/2)} V(m,2 \, i)(-1)^{i}y^{2 \, i}
\end{equation}
\begin{equation}\label{eq:2.12}
D(y)= \sum_{i=0}^{i=int((m-1)/2)} V(m,2 \, i+1)(-1)^{i}y^{2 \, i+1} .
\end{equation}
For sake of clarity, $U$ and $V$ are the symmetric expressions of the time constants $t_{i}$ and $z_{i}$; $U(n,k)$ is the sum of the $\binom{n}{k}$ products of $k$  different $t_{i}$ selected among the total $n$ (for example $U(3,2)=t_{1}t_{2}+t_{2}t_{3}+t_{3}t_{1}$ and $U(3,3)=t_{1}t_{2}t_{3}$).

The derivative of $G(y)$ with respect to $y$ is given by
\begin{equation}\label{eq:2.13}
G'(y)= h-G_{1}(y) - yG'_{1}(y).
\end{equation}

Assuming $G(y)=0$ and $h=-1$ in (\ref{eq:2.3}), one obtains $tan(y/2) = E(y)$ where
\begin{equation}\label{eq:2.14}
E(y)= \frac{-Q(y) \pm \sqrt{P^{2}(y)+Q^{2}(y)-1}}{1+P(y)}\ .
\end{equation}
It is easy to check that $E(y)$ exists for $y=0$ only if $\sum_{i=1}^{i=n} t_{i}^{2} >\sum_{i=1}^{i=m} z_{i}^{2}$.
The derivative of $E(y)$ with respect to $y$, evaluated at $y=0$ and named $E'(0)$, is given by
\begin{equation}\label{eq:2.15}
\begin{split}
&E'(0) = +0.5(-U(n,1)+V(m,1)) \\  &\pm 0.5 \sqrt{U^{2}(n,1)-2 \, U(n,2)-V^{2}(m,1)+2 \, V(m,2)} .
\end{split}
\end{equation}
Denoting by $E_{-}(y)$ and $E_{+}(y)$ the two branches of $E(y)$ related respectively to the minus and plus signs, their derivatives 
$E'_{-}(0)$ and $E'_{+}(0)$ are higher than the derivative of $tan(y/2)$, equal to 0.5, depending on $\Phi_{1}$ and $\Phi_{2}$, given by
\begin{equation}\label{eq:2.16}
\Phi_{1} =1+U(n,1)-V(m,1) 
\end{equation}
\begin{equation}\label{eq:2.17}
\begin{split}
\Phi_{2} = &+1+2 \,U(n,1)+2 \,U(n,2) -2 \,U(n,1)V(m,1) \\ &+2 \,V^{2}(m,1)-2 \,V(m,1)-2 \,V(m,2).
\end{split}
\end{equation}
In detail, the number of the derivatives $E'_{-}(0)$ and $E'_{+}(0)$ higher than 0.5 are the following: zero if $\Phi_{1}>0$ and $\Phi_{2}>0$, one if  $\Phi_{2}<0$, and two if $\Phi_{1}<0$ and $\Phi_{2}>0$.
Let us denote by $E_{d}$ 
\begin{equation}\label{eq:2.19}
E_{d}=tan(y_{t}/2)- E(y_{t})
\end{equation}
where $y_{t}$, corresponding to equal derivatives with respect to $y$ of $tan(y/2)$ and $E(y)$, is a root of $0.5(1+tan^{2}(y_{t}/2))= E'(y_{t})$.

Differentiating (\ref{eq:2.5}) with respect to $y$ once and twice, one obtains
\begin{equation}\label{eq:2.20}
\frac{dG_{1}(y)}{dy}\ =(-P'(y)+Q(y))cos(y)+(P(y)+Q'(y))sin(y)
\end{equation}
\begin{equation}\label{eq:2.21}
\begin{split}
\frac{d^{2}G_{1}(y)}{dy^{2}}\ = &+(+P(y)-P''(y)+2 \,Q'(y))cos(y)\\ &+(-Q(y)+Q''(y)+2 \,P'(y))sin(y) .
\end{split}
\end{equation}
It is worthwhile to note that $G_{1}(0)=-1$, $dG_{1}(0)/dy=0$, and $d^{2}G_{1}(0)/dy^{2}=\Phi_{2}$, where $\Phi_{2}$ is given by (\ref{eq:2.17}).
Evaluating $h_{m}=\left|G_{1}(y_{m})\right|$, where $y_{m}$ is a root of $dG_{1}(y)/dy$ given by (\ref{eq:2.20}), one obtains 
\begin{equation}\label{eq:2.22}
h_{m}= \frac{ \left|P^{2}(y_{m})+Q^{2}(y_{m})+P(y_{m})Q'(y_{m})-P'(y_{m})Q(y_{m}) \right|}{\sqrt{(P'(y_{m})-Q(y_{m}))^{2}+(P(y_{m})+Q'(y_{m}))^{2}}}\ .
\end{equation}

Eliminating $cos(y)$ and $sin(y)$ from $F(y)=0$ and $G(y)=0$, given by  (\ref{eq:2.2}) and (\ref{eq:2.3}), yields
\begin{equation}\label{eq:2.23}
h_{e}= y \sqrt{P^{2}(y)+Q^{2}(y)-h^2} .
\end{equation}
Denote by $\Gamma_{ai}$ and $\Gamma_{bi}$ the two straight lines whose equations in the ($h_{i}, h_{d}$)-plane  are obtained introducing in (\ref{eq:2.6}) respectively $y=y_{ai}$, $h=h_{eai}$ and $y=y_{bi}$, $h=h_{ebi}$ (see Figs. 2 and 5); denote further by $V_{i}$, $U_{i}$ and $W_{i}$ the vertices of a triangle, whose sides are the axis $h_{d}$ and the two lines $\Gamma_{ai}$ and $\Gamma_{bi}$. The coordinates of these vertices are given by
\begin{equation}\label{eq:2.24}
h_{i}(V_{i})= \frac{y_{bi}^{2}h_{eai}-y_{ai}^{2}h_{ebi}}{y^{2}_{bi}-y^{2}_{ai}}\
; \quad h_{d}(V_{i})=  \frac{-h_{ebi}+h_{eai}}{y^{2}_{bi}-y^{2}_{ai}}\
\end{equation}
\begin{equation}\label{eq:2.25}
h_{d}(U_{i})= - h_{ebi}/y_{bi}^{2} ; \quad h_{d}(W_{i})= - h_{eai}/y_{ai}^{2} . 
\end{equation}
Considering (\ref{eq:2.23}), the coordinates $h_{d}(R_{i})$ and $h_{d}(S_{i})$ of the points lying on $\Gamma_{ai}$ and $\Gamma_{bi}$ at $h_{i}=h_{i}(V_{1})$ are given by
\begin{equation}\label{eq:2.26}
h_{d}(R_{i})= \frac{h_{i}(V_{1})}{y^{2}_{bi}}\ - sign[h_{ebi}]   \frac{\sqrt{P^{2}(y_{bi})+Q^{2}(y_{bi})-h^{2}}}{y_{bi}}\
\end{equation}
\begin{equation}\label{eq:2.27}
h_{d}(S_{i})= \frac{h_{i}(V_{1})}{y^{2}_{ai}}\ - sign[h_{eai}]    \frac{\sqrt{P^{2}(y_{ai})+Q^{2}(y_{ai})-h^{2}}}{y_{ai}}\ .
\end{equation}
It is easy to check that, when $i= \infty$, the absolute values of $h_{d}(U_{i})$ and $h_{d}(W_{i})$ , if $m<n-1$, are equal to $\infty$ and, if $m=n-1$, to $H_{d}( \infty)$ given by
\begin{equation}\label{eq:2.28}
H_{d}( \infty)= \left|U(n,n)/V(m,m) \right| .
\end{equation}

\section{Process transfer function without zeros}
When the process transfer function does not have zeros, $m=0$ and thus  $C(y)=1$, $D(y)=0$, $P(y)=A(y)$, $Q(y)=B(y)$ hold; therefore, the function $H(\sigma)$, given by (\ref{eq:2.1}), is a quasi-polynomial and the Pontryagin's results are integrally applicable. The following two conditions derived from Theorem 3.2 of \cite{bib1}\ and from Theorem 13.7 of \cite{bib9}, respectively, must be satisfied in order to have a stable system:
\begin{itemize}
\item Condition no. 1\\
Consider that the principal term of $H(\sigma)$, given by (\ref{eq:2.1}), is $\sigma^{n+1}e^{ \sigma}$, set $H(j \,y)=F(y)+jG(y)$ and let $\epsilon$ be an appropriate constant such that the coefficient of  $y^{n+1}$ in $G(y)$ does not vanish at $y=\epsilon$. The number $N_{r}$ of the real roots of $G(y)$ in the interval $-2 \,r \pi + \epsilon \leq y \leq 2 \,r \pi  + \epsilon$ for sufficiently large $r$ must be
\begin{equation}\label{eq:3.1}
N_{r}=4 \, r +n+1 .
\end{equation}

\item Condition no. 2\\
For all the zeros $y=y_{0}$ of the function $G(y)$ the inequality $G'(y_{0})F(y_{0})-G(y_{0})F'(y_{0})>0$, that is $G'(y_{0})F(y_{0})>0$, must hold.
\end{itemize}

In order to study both stable and unstable free-delay plants, the following two cases, adopted also in \cite{bib1}, are considered:
\begin{enumerate}
\item $U(n,n)>0$ (even number of negative plant time constants $T_{i}$) \\ $h>-1$ and $h_{i}>0$.
\item $U(n,n)<0$ (odd number of negative plant time constants $T_{i}$) \\ $h<-1$ and $h_{i}<0$.
\end{enumerate}

From (\ref{eq:2.3}), (\ref{eq:2.9}) and (\ref{eq:2.10}) it follows that the  coefficient of the highest degree of $y$ in $G(y)$ is $U(n,n)cos(y)$ when $n$ is even and $U(n,n)sin(y)$ when odd; hence we assume $ \epsilon= 0$ if $n$ is even and $ \epsilon= 0.5 \pi $ if  odd.
\begin{figure}[htbp]
\centering
\includegraphics{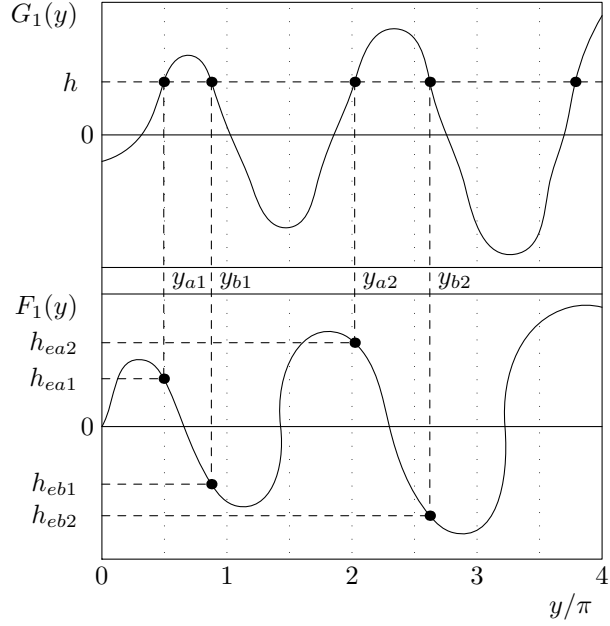}
\caption{Typical plots of $F_{1}(y)$ and $G_{1}(y)$}
\end{figure}

Typical functions $F_{1}(y)$ and $G_{1}(y)$ are plotted in Fig. 2; according to (\ref{eq:2.3}) there is one root of $G(y)$ at $y=0$ and one for each intersection of $G_{1}(y)$ with the horizontal line having the ordinate equal to a given $h$.

Denoting by $N_{e}$ the number of the intersections between $G_{1}(y)$ and $h=-1$ corresponding to $N_{r}$ in Fig. 2 and assuming that no local minimum or maximum of $G_{1}(y)$ is equal to $-1$ for $y \neq 0$, the relationship  between $N_{e}$ and $N_{r}$ is given by
\begin{equation}\label{eq:3.2}
\begin{split}
&N_{e}=N_{r}-1=4 \,r+n \quad for \quad U(n,n) \Phi_{2}<0\\ &N_{e}=N_{r}-3=4 \, r +n-2 \quad for \quad U(n,n) \Phi_{2}>0 ,
\end{split}
\end{equation}
where $\Phi_{2}$ is according to (\ref{eq:2.17}); (\ref{eq:3.2}) can be easily checked considering that from  (\ref{eq:2.5}), (\ref{eq:2.20}) and (\ref{eq:2.21}) it follows $G_{1}(0)=-1$, $G'_{1}(0)=0$ and $G''_{1}(0)=\Phi_{2}$, and also that $h>-1$ must hold if $U(n,n)>0$ and $h<-1$ if $U(n,n)<0$.
Since $h=-1$ is the common limit value of $h$ for the two considered cases ($U(n,n)<0$ and $U(n,n)>0$), the existence of the $N_{e}$ intersections given by (\ref{eq:3.2}) represents a prerequisite of a plant to be made stable.

The number $N_{e}$ can be evaluated by counting the intersections of the plots of $tan(y/2)$ and $E(y)$, given by (\ref{eq:2.14}). From (\ref{eq:2.14}) it follows that $E(y)$ is an odd function of $y$; moreover, assuming $S_{a}=sign[A(+ \infty)]$ and $S_{b}=sign[B(+ \infty)]$, $E_{-}(+ \infty)$ and $E_{+}(+ \infty)$ can be expressed as
\begin{itemize}
\item $n$ even \\ $E_{-}(+ \infty)=- S_{a} \,1$ and $E_{+}(+ \infty)=+ S_{a} \,1$ .
\item $n$ odd \\ $E_{-}(+ \infty)=- S_{a} \, \infty$ for $S_{b}>0$ and $E_{-}(+ \infty)=- S_{a} \, 0$ for $S_{b}<0$, \\ $E_{+}(+ \infty)=+ S_{a} \, 0$ for $S_{b}>0$ and $E_{+}(+ \infty)=+ S_{a} \, \infty$ for $S_{b}<0$.
\end{itemize}
If $E(y)$ has no pole, splitting $N_{e}$ into $N_{e1}$ ($ \left|y \right|< \pi$) and $N_{e2}$ and considering the above described behavior of $E(y)$ at $y= \infty$ and also at $y=0$, one obtains
\begin{itemize}
\item $ \left|y \right|< \pi$ \\
$N_{e1}=0$ if $\Phi_{1}>0$ and $\Phi_{2}>0$, \\ $N_{e1}=2$ if 
 $\Phi_{2}<0$, \\ $N_{e1}=4$ if $\Phi_{1}<0$ and $\Phi_{2}>0$, \\
where $\Phi_{1}$ and $\Phi_{2}$ are given by (\ref{eq:2.16}) and (\ref{eq:2.17}).
\item $-2 \,r \pi + \epsilon \leq y \leq - \pi$; $\pi \leq y \leq 2 \,r \pi  + \epsilon$ (see Fig. 3 (a)) \\
$N_{e2}=4 \,r -2$ if $n$ is even,\\ $N_{e2}=4 \,r -1$ if $n$ is odd and $A(+ \infty)B(+ \infty)>0$,\\ $N_{e2}=4 \,r -3$ if $n$ is odd and $A(+ \infty)B(+ \infty)<0$,\\ where $sign[A(+ \infty)B(+ \infty)]=sign[-(-1)^{n}U(n,n-1)U(n,n)]$ .
\end{itemize}
It is clear that, if $E(y)$ has no pole, $N_{e}$ will be always lower than the value required by (\ref{eq:3.2}) for enough large $n$. A positive solution can be reached only if $E(y)$ is provided with a suitable number of poles, since $N_{e2}$ is increased by one for each added pole (see Fig. 3); this happens if $E_{d}>0$ for case (b1), if $E_{d}<0$ for case (b2) and without further condition for case (b3), where $E_{d}$ is  given by (\ref{eq:2.19}).
Since the denominator of $E(y)$ is a polynomial of $y^{2}$ of degree $d=int(n/2)$, the maximum number of poles of $E(y)$ is equal to $2 \, d$ and the actual number can be determined by means of the Sturm Theorem, as detailed in Appendix A.
 
\begin{figure}[htbp]
\centering
\includegraphics{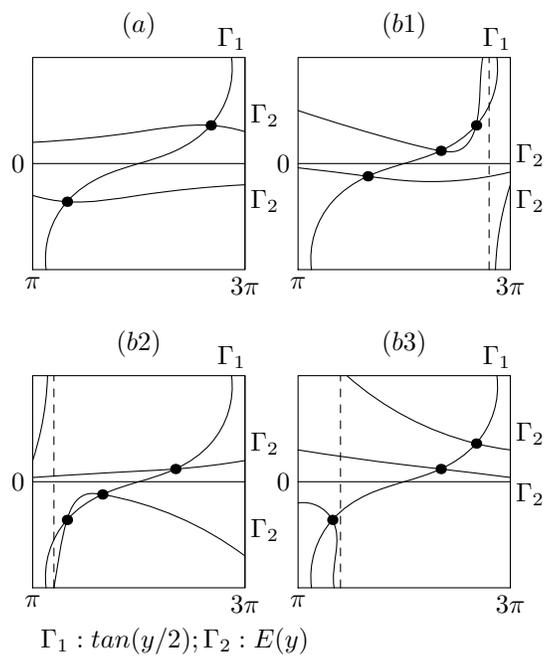}
\caption{Plots of $tan(y/2)$ and $E(y)$ for $y> \pi$}
\end{figure}

The sought-after procedure can be detailed as follows:
\begin{enumerate}
\item Process parameters (see Fig. 4)\\
The stability region in the process parameters plane is that where (\ref{eq:3.2}) holds; its boundary line is a proper set of the boundary lines of the zones with different numbers $N_{e}$, that is,
$\Phi_{1}=0$ and $\Phi_{2}=0$ according to (\ref{eq:2.16}) and (\ref{eq:2.17}) and $\Psi_{i,j}=0$, as explained in Appendix A, and eventually $E_{d}=0$ given by (\ref{eq:2.19}).
Since the expressions of these boundary lines are functions of $t_{i}=T_{i}/L$, it is possible to evaluate the stability range of $L$ when the parameters $T_{i}$ are known and, conversely, of each $T_{i}$ when $L$ and the remaining $T_{i}$ are known.
Moreover, if one needs only to know if a given plant can be made stable, it is not necessary to determine the stability regions, but it is enough to examine the plots of $E(y)$ and $tan(y/2)$ and to check whether the number of the intersections $N_{e}$ satisfies (\ref{eq:3.2}).

\item Controller parameter $h$ (see Fig. 2)\\
The requirement, stated as Condition no. 1, is  fulfilled if the selected value of $h$ is included in the interval from $-1$ to $h_{p}$ for $U(n,n)>0$ or from $h_{n}$ to $-1$ for $U(n,n)<0$, where $h_{p}$ and $h_{n}$  are respectively the local maxima and minima of $G_{1}(y)$ nearest to $-1$; both can be calculated by introducing in (\ref{eq:2.5}) the related root of $dG_{1}(y)/dy$ obtained from (\ref{eq:2.20}). A finite number of these maxima or minima must be examined in order to find the nearest to $-1$, exactly up to the first value of $y_{m}$ higher than $y_{r1}$. This limit $y_{r1}$ is the largest positive root of the derivative with respect to $y_{m}$ of $h_{m}$, since $h_{m}$, given by (\ref{eq:2.22}), monotonically increases for $y_{m}>y_{r1}$. It is obvious that, if such root does not exist, only the first value must be considered.

\item Controller parameters $h_{i}$, $h_{d}$ (see Fig. 5)\\
Considering (\ref{eq:2.2}), (\ref{eq:2.6}) and (\ref{eq:2.13}), the requirement, stated as Condition no. 2, is fulfilled if the following inequalities
\begin{equation}\label{eq:3.3}
\begin{split}
&h_{i}-h_{d}y^{2}_{0}<F_{1}(y_{0}) \quad  if \quad  G'_{1}(y_{0})>0 \\  &h_{i}-h_{d}y^{2}_{0}>F_{1}(y_{0}) \quad  if \quad   G'_{1}(y_{0})<0
\end{split}
\end{equation}
hold for each root $y=y_{0}$ of $G(y)$, given by (\ref{eq:2.3}), evaluated with a value of $h$ included in the above specified interval.

The stability region in the ($h_{i},h_{d}$)-plane consists of the intersection of a finite number of triangles; each of them is related to a couple of roots $y_{ai}$ and $y_{bi}$ of $G(y)$, has the axis $h_{d}$ and the two straight lines given by (\ref{eq:3.3}) as sides and the points $U_{i}$, $V_{i}$ and $W_{i}$ as vertices, whose coordinates are given by (\ref{eq:2.24}) and (\ref{eq:2.25}) (see Figs. 2 and 5). Since, as $i \rightarrow \infty$,  $h_{d}(U_{i})$ and $h_{d}(R_{i})$ approach $ \pm \infty$ and  $h_{d}(W_{i})$ and $h_{d}(S_{i})$ approach $ \mp \infty$, each triangle includes definitely the first one when $y_{bi}>y_{r2}$. This limit $y_{r2}$ is the bigger among $y_{b1}$ and the largest root of the derivatives with respect to $y_{bi}$ of $h_{d}(U_{i})$ and $h_{d}(R_{i})$, given by (\ref{eq:2.25}) and (\ref{eq:2.26}). Therefore, it is not necessary to  examine the triangles for $y_{bi}>y_{r2}$. 
\end{enumerate}

A second-order plant, whose transfer function is without zeros, is considered as example and the stability regions of the plant and the controller parameters are depicted respectively in Figs. 4 and 5. In Fig. 4 this region consists of the following: 
\begin{itemize}
\item $Z_{1}$: $U(n,n)>0$; $ \Phi_{1}>0$ and $ \Phi_{2}>0$
\item $Z_{2}$: $U(n,n)>0$; $ \Phi_{1}<0$ and $ \Phi_{2}>0$
\item $Z_{3}$: $U(n,n)<0$; $ \Phi_{2}<0$
\end{itemize}
where $\Phi_{1}=1+t_{1}+t_{2}$ and $\Phi_{2}=1+2 \,t_{1}+2 \, t_{2}+2 \,t_{1}t_{2}$ according to (\ref{eq:2.16}) and (\ref{eq:2.17}).
The required number $N_{e}$ of the intersections between $E(y)$ and $tan(y/2)$, equal to $4 \, r$ as per (\ref{eq:3.2}), coincide with the actual number only in this zone.

Since $P(y)=1-t_{1}t_{2}y^{2}$ $Q(y)=(t_{1}+t_{2})y$, considering the point ($t_{1}=0.6$; $t_{2}=0.8$) lying in $Z_{1}$ of Fig. 4, one obtains $y_{p}=1.778$ from  (\ref{eq:2.20}) for the first root $y=y_{p}$ of $dG_{1}(0)/dy$  and $h_{p}=G_{1}(y_{p})=2.330$ from (\ref{eq:2.5}). Since $-1<h<h_{p}$ must hold, let us assume $h$ equal to $0.5$;  for the first two roots higher than zero of $G(y)$ one obtains $y_{a1}=0.863$ and $y_{b1}=2.498$ from  (\ref{eq:2.3}) and hence $h_{ea1}=F_{1}(y_{a1})=1.099$ and $h_{eb1}=F_{1}(y_{b1})=-9.985$ from (\ref{eq:2.4}).
From (\ref{eq:2.24}) and (\ref{eq:2.25}) it follows $h_{i}(V_{1})=2.600$, $h_{d}(V_{1})=2.016$, $h_{d}(U_{1})=1.600$ and $h_{d}(W_{1})=-1.476$. Similarly, for the third and fourth roots of $G(y)$ one obtains $y_{a2}=5.285$ and $y_{b2}=8.191$, $h_{ea2}=76.290$, $h_{eb2}=-272.288$, $h_{d}(U_{2})=4.058$ and $h_{d}(W_{2})=-2.732$; moreover $h_{d}(R_{2})=4.097$ and $h_{d}(S_{2})=-2.638$ from  (\ref{eq:2.26}) and (\ref{eq:2.27}).

\begin{figure}[htbp]
\centering
\includegraphics{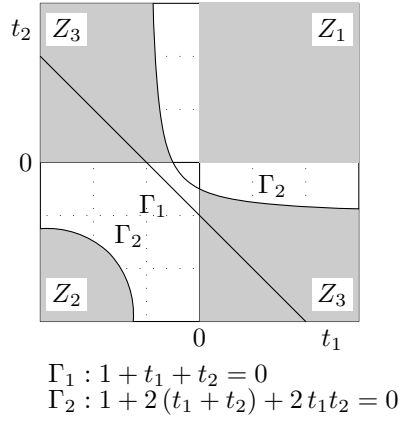}
\caption{Stability zone of the process parameters for $n=2$ and $P_{n}(s)=1$}
\end{figure}

\begin{figure}[htbp]
\centering
\includegraphics{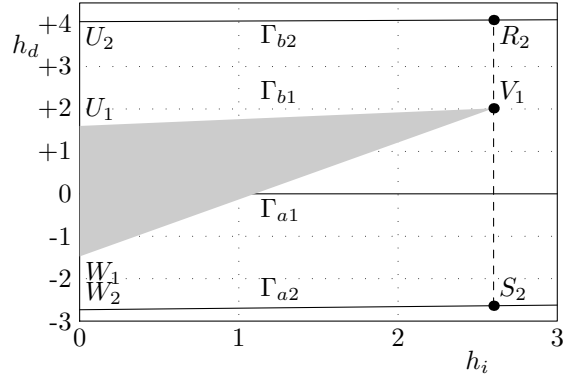}
\caption{Stability zone of the controller parameters $h_{i}$ and $h_{d}$ for $n=2$, $P_{n}(s)=1$, $t_{1}=0.6$, $t_{2}=0.8$, $h=0.5$}
\end{figure}

\section{Process transfer function with zeros}
The function $H(\sigma)$, given by (\ref{eq:2.1}), can be rewritten as
\begin{equation}\label{eq:4.1}
H(\sigma)=  \sigma^{n+1-m}  e^{\sigma} \frac{U(n,n)}{V(m,m)}\ \frac{1+ H_{n}}{ 1+H_{d}}\ ,
\end{equation}
where 
\begin{displaymath}
H_{n}=  \sum_{i=1}^{i=n} \frac{U(n,n-i)}{U(n,n)\sigma^{i}}\ + e^{- \sigma} (h_{i} \sigma^{-2}+ h \sigma^{-1}+h_{d} ) \sum_{i=n-1-m}^{i=n-1} \frac{V(m,n-1-i)}{U(n,n)\sigma^{i}}\ 
\end{displaymath}
\begin{displaymath}
H_{d}= \sum_{i=1}^{i=m} \frac{V(m,m-i)}{V(m,m)\sigma^{i}}\  
\end{displaymath}
$m<n-1$ or $m=n-1$ and $ \left |h_{d}V(m,m) \right |<\left |U(n,n) \right |$, as explained in Section 2.
Since the denominator of $H(\sigma)$ is the numerator of the plant transfer function, the poles of $H(\sigma)$ are the zeros of the plant transfer function ($\sigma_{i}=-1/z_{i}$).
In this case the proposed procedure will be in accordance with Theorem 4.1, a generalization of the theorems applied in Section 3 and of Theorem 13.5 of \cite{bib9}; it will be here enunciated and proved by use of the  Principle of the Argument.
\begin{theorem}
Consider a function $H(\sigma)$ of the form in (\ref{eq:4.1}), set $H(j \,y)=F(y)+jG(y)$ and let $\epsilon$ be an appropriate constant such that the coefficient of $y^{n+1-m}$ in the numerator of $G(y)$ does not vanish at $y=\epsilon$. Assume further that the $m_{p}$ poles with positive real part of $H(\sigma)$ lie all in the rectangle $R$, described by the inequalities $-2r \pi + \epsilon<=y<=2r \pi + \epsilon$, $x>0$ ($\sigma=x+j \, y$). Suppose finally that the function $H(\sigma)$ does not assume the value of zero on the imaginary axis, that is, $H(jy)\neq 0$.

All the zeros  of $H(\sigma)$ lie to the left of the imaginary axis if and only if:
\begin{enumerate}
\item[(a)] The vector $w=H(j \, y)$ for real $y$ ranging from $- \infty$ to $+ \infty$  continually revolves in the positive direction at a positive velocity, that is, the inequality $G'(y_{0})F(y_{0})<0$ is satisfied for each root $y=y_{0}$ of $G(y)$.
\item[(b)] The number $N_{r}$ of the roots of $G(y)$ in the interval $-2r \pi + \epsilon <=y<=2r \pi + \epsilon$ for sufficiently large $r$ is
\begin{equation}\label{eq:4.2}
N_{r}=4 \, r +n+1-m+2 \,m_{p} .
\end{equation}
\end{enumerate}
\end{theorem}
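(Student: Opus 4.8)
The plan is to apply the Principle of the Argument to the meromorphic function $H(\sigma)$ on a large contour and extract from the net change of argument both the winding condition (a) and the counting condition (b). First I would fix a large $r$ so that, by hypothesis, all $m_p$ right-half-plane poles of $H(\sigma)$ lie inside the rectangle $R$, and moreover so that all right-half-plane zeros — if any existed — would also lie in $R$; this last fact needs the principal term $a_{p\,q}\sigma^{p}e^{q\sigma}$ structure (with $p=n+1-m$, $q=1$ after clearing the polynomial denominator $1+H_d$) together with the Pontryagin bound quoted in Section 2, which guarantees the zeros of $H$ cannot escape to $+\infty$ in real part. Then I would take the closed contour $\partial R$ traversed counterclockwise: up the segment of the imaginary axis from $\sigma=j(-2r\pi+\epsilon)$ to $\sigma=j(2r\pi+\epsilon)$, and back along the three sides of the rectangle in the right half-plane.

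Next I would evaluate $\frac{1}{2\pi i}\oint_{\partial R}\frac{H'(\sigma)}{H(\sigma)}\,d\sigma = Z_R - P_R$, where $Z_R$ is the number of zeros and $P_R=m_p$ the number of poles of $H$ inside $R$ (using $H(jy)\neq 0$ so the contour avoids zeros, and noting the poles $\sigma_i=-1/z_i$ with positive real part are exactly the $m_p$ counted). The claim ``all zeros lie to the left of the imaginary axis'' is then equivalent to $Z_R=0$, i.e. to the total argument change around $\partial R$ equalling $-2\pi m_p$. I would split this total into the contribution from the imaginary-axis segment and the contribution from the large arc. On the large arc the behaviour of $H(\sigma)$ is dominated by $\sigma^{n+1-m}e^{\sigma}\,U(n,n)/V(m,m)$, since $H_n,H_d\to 0$ there; a standard estimate (exactly as in the proofs of Pontryagin's Theorems 13.5 and 13.7 cited) shows the argument increase along the three outer sides is $(4r+n+1-m)\pi + o(1)$ as $r\to\infty$. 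Hence the imaginary-axis segment must contribute $\arg$-change $= -2\pi m_p - (4r+n+1-m)\pi$ for $Z_R=0$; equivalently, reading the segment upward from $y=-2r\pi+\epsilon$ to $y=2r\pi+\epsilon$, the increase of $\arg H(jy)$ must be $(4r+n+1-m+2m_p)\pi$.

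Finally I would translate this argument-increase statement along the imaginary axis into the two itemised conditions. Writing $H(jy)=F(y)+jG(y)$, the vector $w=H(jy)$ turns monotonically in the positive sense precisely when it never reverses, i.e. when at every crossing of the real axis ($G(y_0)=0$) the crossing is strictly in the positive direction; computing the sense of crossing gives $\operatorname{sign}$ of $-G'(y_0)F(y_0)$, so monotone positive rotation is the inequality $G'(y_0)F(y_0)<0$ of (a). Granted monotonicity, the total argument increase over the interval is $\pi$ times the number of real-axis crossings, which is the number $N_r$ of roots of $G$ in that interval (the contribution of the endpoints being absorbed into the $o(1)$ and the choice of $\epsilon$, just as in Section 3); matching with $(4r+n+1-m+2m_p)\pi$ yields (b). Conversely, if (a) and (b) hold, the same computation runs backwards to give argument-change $-2\pi m_p$ around $\partial R$, hence $Z_R=0$, hence no zero in the right half-plane. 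The main obstacle I anticipate is the large-arc estimate: one must show carefully that the correction terms $H_n$ and $H_d$, which contain the delay factor $e^{-\sigma}$ and negative powers of $\sigma$, are genuinely negligible in argument uniformly along the outer sides as $r\to\infty$, and that the parity bookkeeping (the role of $\epsilon$, the $(-1)^{n+1-m}$ signs, whether $n+1-m$ is even or odd) is handled consistently so that exactly $(4r+n+1-m)\pi$ — and not that plus a stray multiple of $\pi$ — comes out; this is the step where the hypotheses on $\epsilon$ and on the principal term must be used in full.
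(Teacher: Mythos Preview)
Your proposal is correct and follows essentially the same route as the paper: apply the Principle of the Argument to $H(\sigma)$ on the rectangle $R$, compute the outer-arc contribution from the dominant term $\sigma^{n+1-m}e^{\sigma}\,U(n,n)/V(m,m)$ to obtain $(4r+n+1-m)\pi$, identify the imaginary-axis contribution with $-N_{r}\pi$ under condition (a), and balance against $-2\pi m_{p}$. Your write-up is in fact more careful than the paper's terse one-paragraph argument --- you treat both directions of the equivalence explicitly and flag the large-arc estimate and the role of $\epsilon$, points the paper leaves implicit.
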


\begin{proof}
Denote by $C_{1}$, $C_{2}$, $C_{3}$ and $C_{4}$ the corners of the rectangle $R$ whose coordinates in the $\sigma$-plane are respectively $\sigma_{1}=0+j(-2r \pi + \epsilon)$, $\sigma_{2}=+ \infty +j(-2r \pi + \epsilon)$, $\sigma_{3}=+ \infty +j(+2r \pi + \epsilon)$ and $\sigma_{4}=0+j(+2r \pi + \epsilon)$; the arguments $\theta(\sigma_{1})$ and $\theta(\sigma_{4})$ of $H(\sigma)$ are given by $\theta(\sigma_{1})=-2r \pi+ \epsilon-0.5(n+1-m) \pi + \eta + \delta_{1}$ and $\theta(\sigma_{4})=+2r \pi + \epsilon+0.5(n+1-m) \pi + \eta+\delta_{4}$, where $\eta= 0$ if $V(m,m)/U(n,n)>0$ or $\eta= \pi$ if $V(m,m)/U(n,n)<0$, and $\delta_{1} \rightarrow 0$ and $\delta_{4} \rightarrow 0$ simultaneously with $1/r$. Denote by $N_{z}$ the number of the zeros of $H(\sigma)$ lying in the rectangle $R$, by $V_{a}$ the variation of the argument of $H(\sigma)$ in the counterclockwise direction as $\sigma$ moves around the contour of $R$ from $C_{1}$ to $C_{4}$ through $C_{2}$ and $C_{3}$, and by $V_{b}$ as $\sigma$ moves directly from $C_{4}$ to $C_{1}$.  Using the Principle of the Argument yields
\begin{equation}\label{eq:4.3}
V_{a}+V_{b}=(N_{z}-m_{p}) 2 \, \pi .
\end{equation}
Since  $N_{z}=0$ for stability and , for $r \rightarrow \infty$,  $V_{b}=- N_{r} \pi$ as per condition (a) and $V_{a}=\theta_{4}- \theta_{1}=+4 \, r \pi+(n+1-m) \pi $, (\ref{eq:4.2}) follows from (\ref{eq:4.3}) and the condition (b) is satisfied. 
\end{proof}

The procedure detailed in Section 3 for process transfer functions without  zeros is fully applicable to process transfer functions with zeros; it is only necessary to replace (\ref{eq:3.1}) with  (\ref{eq:4.2}) and to consider $C(y)$ and $D(y)$ as functions given by (\ref{eq:2.11}) and (\ref{eq:2.12}) instead of $C(y)=1$ and $D(y)=0$. Moreover, for $m=n-1$, the rectangle, according to (\ref{eq:3.3}) for $y_{0}=+ \infty$ and provided with horizontal sides symmetric with respect to  the axis $h_{i}$ at a distance given by (\ref{eq:2.28}), must be considered in the $(h_{i},h_{d})$-plane.

\section{Conclusions}
In this note, both stable and unstable delay-free arbitrary-order plants, provided with one time delay and PID controller, have been examined and the related stability regions in process and controller parameter spaces have been determined by use of the Pontryagin's studies. The proposed procedure, consisting of a finite number of steps, yields explicit expressions of the boundaries of the stability zone for the controller parameters. These results can be implemented in tuning charts, which become a complete tool for the design and the maintenance of control systems.

\appendix{}
\section{Sturm Theorem}
The Sturm Theorem states that the number of real roots of an algebraic equation with real coefficients whose real roots are simple over an interval, the endpoints of which are not roots, is equal to the difference between the numbers of sign changes of the Sturm chains formed for the interval ends.

Given a function $f(x)=f_{0}(x)$ of degree $d$, assume $f_{1}(x)=df_{0}(x)/dx$ and define the Sturm functions by
\begin{displaymath}
f_{i}(x)= -f_{i-2}(x)+f_{i-1}(x) \left [ \frac{f_{i-2}(x)}{f_{i-1}(x)}\ \right ],
\end{displaymath}
where $\left [f_{i-2}(x)/{f_{i-1}(x} \right ]$ is a polynomial quotient. These functions can be written as
\begin{displaymath}
f_{i}(x)= \sum_{j=0}^{j=d-int((i+1)/2)} \psi_{i,j} x^{j} \quad 0 \leq i \leq 2 \,d-1 ,
\end{displaymath}
where $\psi_{i,j}$ depends on the coefficients of $x$ in $f(x)$.

In our case $x=y^{2}$ and $f_{0}(x)=C^{2}( \sqrt{x})+D^{2}( \sqrt{x})+A( \sqrt{x})C( \sqrt{x})+B( \sqrt{x})D( \sqrt{x})$ hold; since the roots must be positive, the required interval is from $x=0$ to $x=+ \infty$ and, therefore, the signs of each Sturm function at these ends are the signs of $\psi_{i,j}$ evaluated respectively for $j=0$ and $j=d-int((i+1)/2)$.

\end{document}